\theoremstyle{plain}
\newtheorem{theorem}{Theorem}[section]
\newtheorem{lemma}[theorem]{Lemma}
\newtheorem{corollary}[theorem]{Corollary}
\newtheorem{propn}[theorem]{Proposition}
\theoremstyle{definition}
\newtheorem{defn}[theorem]{Definition}
\newtheorem{notation}[theorem]{Notation}
\DeclareMathOperator{\Ext}{Ext}
\DeclareMathOperator{\homology}{H}
\def\loco{\homology}
\DeclareMathOperator{\supp}{Supp}
\DeclareMathOperator{\reg}{reg}
\def\totdeg#1{\Vert#1\Vert}
\def\naturals{\mathbb N}
\newcommand{\ints}{\mathbb{Z}}
\begin{document}
\title[Regularity of deficiency modules]{Regularity of Canonical and
Deficiency modules for Monomial ideals}

\author{Manoj Kummini}
\address{Department of Mathematics, Purdue University, West Lafayette, IN
47901 USA.}
\email{nkummini@math.purdue.edu}

\author{Satoshi Murai}
\address{Department of Mathematical Science, Faculty of Science, Yamaguchi
University, 1677-1 Yoshida, Yamaguchi 753-8512, Japan.} 

\subjclass[2000]{13D45, 13D07}

\begin{abstract}
We show that the Castelnuovo--Mumford regularity of the canonical or a
deficiency module of the quotient of a polynomial ring by a monomial ideal
is bounded by its dimension.
\end{abstract}

\maketitle

\section{Introduction}
Let $R = \Bbbk[x_1, \ldots, x_n]$ be a standard graded polynomial ring over
a field $\Bbbk$ and
$\mathfrak m = (x_1, \ldots, x_n)$ the homogeneous maximal ideal of $R$.
In this paper, we study the Castelnuovo--Mumford
regularity of the modules $\Ext^i_R(R/I, \omega_R)$ when $I\subset R$ is a
monomial ideal; here $\omega_R = R(-n)$ denotes the canonical module
of $R$. The $\Ext^i_R(R/I, \omega_R), i > n-\dim R/I$ are called
\emph{deficiency modules} of $R/I$ while $\Ext^{n-\dim R/I}_R(R/I,
\omega_R)$ is called the \emph{canonical module} of $R/I$.

For any homogeneous ideal $I \subseteq R$, local cohomology modules
$\loco^i_{\mathfrak m}(R/I)$ are important in commutative algebra and
algebraic geometry. One is often interested in the vanishing of homogeneous
components of $\loco^i_{\mathfrak m}(R/I)$. While one cannot expect the
vanishing of $\loco^i_{\mathfrak m}(R/I)$ in negative degrees, unless it
has finite length, one can, using the local duality theorem of
Grothendieck, obtain some information from $\Ext^{n-i}_R(R/I, \omega_R)$.
For a finitely generated graded $R$-module $M$, its
\emph{(Castelnuovo--Mumford) regularity}, $\reg(M)$, is an invariant that
contains information about the
stability of homogeneous components in sufficiently large degrees.  In
light of these, it is desirable to get bounds on $\reg\left( \Ext^i_R(R/I,
\omega_R)\right)$. Such bounds were studied by L.~T.~Hoa and
E.~Hyry~\cite{HoaHyryCMreg06} and M.~Chardin, D.~T.~Ha and Hoa
\cite{CHHregHdeg09}; see also the references in those papers.

Unfortunately, canonical and deficiency modules can have large regularity.
For a finitely generated graded $R$-module $M$, known bounds for
$\reg \left(\Ext^i_R(M, \omega_R)\right)$ are large (see,
\textit{e.g.},~\cite{HoaHyryCMreg06}*{Theorems 9 and 14}). On the other
hand, more optimal bounds for $\reg
\left(\Ext^i_R(R/I, \omega_R)\right)$ are known to exist for certain
classes of graded ideals $I$ (see \cite{HoaHyryCMreg06}*{Section 4}). It is
an interesting problem to find a class of graded ideals $I\subset R$ with
optimal bounds for $\reg\left( \Ext^i_R(R/I, \omega_R)\right)$. In this
paper, we focus on monomial ideals. It follows from the theory of
square-free modules, introduced by K.~Yanagawa \cite{YanaSRringsDuality00},
that if $I$ is a square-free monomial ideal then $\reg \left(\Ext^i_R(R/I,
\omega_R)\right) \leq \dim
\Ext^i_R(R/I, \omega_R)$. This bound is small, since $\dim \Ext^i_R(R/I,
\omega_R) \leq n-i$ (see~\cite{BrHe:CM}*{Corollary~3.5.11}).

While one cannot apply the theory of square-free modules to all monomial
ideals, there are results that show that, when $I$ is a monomial ideal,
$\reg \left(\Ext^i_R(R/I, \omega_R)\right)$ is not large. For example, we
see from~\cite{TakaGCM05}*{Proposition~1, p~333} that if $\Ext^i_R(R/I,
\omega_R)$ has finite length then its regularity is negative or equal to
zero. Again, Hoa
and Hyry~\cite{HoaHyryCMreg06}*{Proposition 21} showed that if
$\loco^i_{\mathfrak m}(R/I)$ has finite
length for $i=0,1,\dots,d-1$, where $d=\dim R/I$, then $\reg 
\left(\Ext^{n-d}_R(R/I, \omega_R)\right) \leq d$. We generalize these results in
the following theorem: 
\begin{theorem}
\label{thm:regIsLessThanDim}
Let $I \subseteq R$ be a monomial ideal. Then, for all $0 \leq i \leq n$,
\[
\reg \left(\Ext^i_R(R/I, \omega_R)\right) \leq \dim \Ext^i_R(R/I, \omega_R).
\]
\end{theorem}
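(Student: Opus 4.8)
The plan is to reduce the general monomial ideal case to the known square-free case by exploiting the $\ints^n$-graded structure and a standard polarization-type argument. The key observation is that $\Ext^i_R(R/I, \omega_R)$ is a $\ints^n$-graded module, and the regularity can be detected through the $\ints$-graded (coarsening) structure. For a square-free monomial ideal, Yanagawa's theory of square-free modules already gives the bound, so the strategy is to transfer information from a polarization $I^{\mathrm{pol}}$ back to $I$.

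First I would recall how regularity and dimension behave under polarization. If $I^{\mathrm{pol}} \subseteq R' = \Bbbk[x_1, \ldots, x_N]$ denotes the polarization of $I$, living in a larger polynomial ring, then $I^{\mathrm{pol}}$ is square-free, and the quotient $R'/I^{\mathrm{pol}}$ is obtained from $R/I$ by adjoining a regular sequence of linear forms (the differences of the new variables with the originals). Passing modulo a regular sequence of linear forms preserves graded Betti numbers and hence regularity, and it shifts dimension by the length of the sequence. The plan is to establish precise comparison statements: $\reg\!\left(\Ext^i_R(R/I, \omega_R)\right)$ equals $\reg\!\left(\Ext^i_{R'}(R'/I^{\mathrm{pol}}, \omega_{R'})\right)$, and similarly $\dim \Ext^i_R(R/I, \omega_R)$ equals $\dim \Ext^i_{R'}(R'/I^{\mathrm{pol}}, \omega_{R'})$, using the behavior of local cohomology (equivalently, deficiency modules via local duality) under the change of rings given by a linear regular sequence.

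Once those two comparisons are in place, the theorem follows immediately: by the square-free case, which I may assume holds for $I^{\mathrm{pol}}$ in $R'$, we have
\[
\reg\!\left(\Ext^i_{R'}(R'/I^{\mathrm{pol}}, \omega_{R'})\right) \leq \dim \Ext^i_{R'}(R'/I^{\mathrm{pol}}, \omega_{R'}),
\]
and transporting both sides back to $R$ via the equalities above yields the desired inequality for $I$. An alternative, more self-contained route avoids polarization entirely: one works directly with the $\ints^n$-graded local cohomology $\loco^j_{\mathfrak m}(R/I)$, whose components are computed combinatorially (for instance via the Koszul or \v{C}ech complex on the $x_i$), and bounds the degrees in which the relevant Ext modules are nonzero by analyzing the supports of these graded pieces.

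The main obstacle I anticipate is the regularity comparison under polarization when the module in question is an Ext module rather than $R/I$ itself. Regularity of $R/I$ is manifestly preserved by polarization, but I must verify that the functor $\Ext^i_R(-, \omega_R)$ interacts correctly with the change of rings, that is, that deficiency and canonical modules of $R/I$ and of $R'/I^{\mathrm{pol}}$ have the same regularity after accounting for the grading shift in $\omega$. This requires a careful local-duality bookkeeping: relating $\loco^j_{\mathfrak m}(R/I)$ to $\loco^j_{\mathfrak m'}(R'/I^{\mathrm{pol}})$ across the linear regular sequence and checking that the extremal graded degrees controlling regularity match up. Establishing this compatibility cleanly is where the real work lies; the final inequality is then a formal consequence.
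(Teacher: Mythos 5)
Your reduction rests on two comparison claims---that $\reg\bigl(\Ext^i_R(R/I,\omega_R)\bigr)$ and $\dim \Ext^i_R(R/I,\omega_R)$ agree with the corresponding invariants of $\Ext^i_{R'}(R'/I^{\mathrm{pol}},\omega_{R'})$---and both are false as stated. Take $I=(x^2)\subset R=\Bbbk[x]$, so $I^{\mathrm{pol}}=(x_1x_2)\subset R'=\Bbbk[x_1,x_2]$. Then $\Ext^1_R(R/I,\omega_R)\cong (R/(x^2))(1)$ has regularity $0$ and dimension $0$, while $\Ext^1_{R'}(R'/I^{\mathrm{pol}},\omega_{R'})\cong R'/(x_1x_2)$ has regularity $1$ and dimension $1$. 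So at best one can hope for shifts by the number of new variables; the dimension discrepancy, in particular, is not a ``grading shift in $\omega$'' that bookkeeping can absorb.

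The deeper gap is the transfer mechanism itself. The polarization forms $\theta$ are regular on $M=R'/I^{\mathrm{pol}}$, but nothing makes them regular on the modules $E^j=\Ext^j_{R'}(M,\omega_{R'})$. Dualizing $0\to M(-1)\xrightarrow{\ \theta\ } M\to M/\theta M\to 0$ into $\omega_{R'}$ and using the Rees isomorphism $\Ext^{j+1}_{R'}(M/\theta M,\omega_{R'})\cong \Ext^j_{\bar R}(M/\theta M,\omega_{\bar R})$, where $\bar R=R'/(\theta)$, gives for each $j$ an exact sequence
\begin{equation*}
0 \longrightarrow \bigl(E^j/\theta E^j\bigr)(1) \longrightarrow \Ext^j_{\bar R}\bigl(M/\theta M, \omega_{\bar R}\bigr) \longrightarrow \bigl(0:_{E^{j+1}}\theta\bigr) \longrightarrow 0,
\end{equation*}
so each canonical or deficiency module downstairs is an extension mixing \emph{two consecutive} such modules upstairs. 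To run your argument you would need at least $\reg\bigl(E^j/\theta E^j\bigr)\le \reg E^j$ and control of $\reg\bigl(0:_{E^{j+1}}\theta\bigr)$; neither inequality holds for arbitrary modules and non-generic linear forms, and the polarization forms (differences of old and new variables) are not generic. Worse, the dimension of the downstairs module may be governed by the torsion piece (coming from $E^{j+1}$) while its regularity is governed by the quotient piece (coming from $E^j$), so even granting $\reg E^j\le\dim E^j$ upstairs for all $j$, the inequalities do not chain to give the bound downstairs. This is exactly the ``careful local-duality bookkeeping'' you defer, and it is not bookkeeping: it is the entire content of the theorem. The paper sidesteps polarization altogether: using the \v Cech complex it proves that multiplication $x_j:\Ext^i_R(R/I,\omega_R)_{\mathbf a}\to \Ext^i_R(R/I,\omega_R)_{\mathbf a+\mathbf e_j}$ is bijective whenever $j\in\supp(\mathbf a^+)$ (Lemma~\ref{thm:multbyxjonExt}), which produces a Stanley decomposition satisfying the hypotheses of Proposition~\ref{thm:stanleyDecompAndFiltr}, and the inequality $\reg\le\dim$ is read off the resulting filtration. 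Your sketched ``alternative route'' via $\ints^n$-graded local cohomology points in this direction, but it contains neither the bijectivity statement nor the filtration argument that makes it work.
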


Since $\dim \Ext^i_R(R/I, \omega_R) \leq n-i$
we immediately get:
\begin{corollary}
\label{thm:regIsLessThanNminusOne}
Let $I \subseteq R$ be a monomial ideal. Then, for all $0 \leq i \leq n$,
\[
\reg \left(\Ext^i_R(R/I, \omega_R)\right) 
\leq n-i.
\]
\end{corollary}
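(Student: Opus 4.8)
The plan is to reduce to the square-free case, which the introduction grants, by means of polarization. Write $\tilde R = \Bbbk[x_{k,l}]$ for the polynomial ring in which the polarization $\tilde I$ of $I$ lives, so that $\tilde I$ is square-free; set $A := \tilde R/\tilde I$. There is a sequence $\theta = \theta_1, \dots, \theta_c$ of linear forms of the shape $x_{k,l} - x_{k,1}$ that is regular on both $\tilde R$ and $A$, with $\tilde R/(\theta) \cong R$ and $A/(\theta)A \cong R/I$. Since the square-free bound applies to $A$, we may assume $\reg \Ext^j_{\tilde R}(A, \omega_{\tilde R}) \le \dim \Ext^j_{\tilde R}(A, \omega_{\tilde R})$ for every $j$, and the goal becomes to propagate this estimate down to $R/I = A/(\theta)A$.

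First I would set up the comparison over the larger ring. Because $\theta$ is an $\tilde R$-regular sequence with $\tilde R/(\theta) = R$, the change-of-rings collapse for $\Ext(-, \omega)$ gives $\Ext^i_R(R/I, \omega_R) \cong \Ext^{i+c}_{\tilde R}(R/I, \omega_{\tilde R})$ up to a degree shift. Moreover, regularity and Krull dimension of a module supported on $\Spec R \subseteq \Spec \tilde R$ agree whether computed over $R$ or over $\tilde R$ (regularity being base-ring independent, and dimension being intrinsic to the support). Thus it suffices to compare the deficiency modules of $A/(\theta)A$ with those of $A$, all over $\tilde R$.

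I would carry out this comparison one linear form at a time. If $z$ is $A$-regular, the sequence $0 \to A(-1) \xrightarrow{z} A \to A/zA \to 0$ yields, after applying $\Ext^\bullet_{\tilde R}(-,\omega_{\tilde R})$ and writing $E^j := \Ext^j_{\tilde R}(A, \omega_{\tilde R})$, the short exact sequences
\[
0 \longrightarrow \bigl(E^{j-1}/zE^{j-1}\bigr)(1) \longrightarrow \Ext^j_{\tilde R}(A/zA, \omega_{\tilde R}) \longrightarrow \bigl(0 :_{E^j} z\bigr) \longrightarrow 0,
\]
in which the connecting maps are multiplication by $z$. Feeding the square-free (or inductive) bound for the $E^j$ into these sequences, and tracking that the dimensions of the outer pieces drop as expected, should give the bound for the deficiency modules of $A/zA$; iterating over $\theta_1, \dots, \theta_c$ then reaches $R/I$. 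Each partial de-polarization $A/(\theta_1,\dots,\theta_s)A$ is again a quotient by a monomial ideal, so the induction stays within the relevant class.

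The hard part will be controlling the two outer terms of the displayed sequence, since $z = x_{k,l}-x_{k,1}$ is a nonzerodivisor on $A$ but need not be one on the deficiency modules $E^j$: I cannot simply invoke $\reg(E^j/zE^j) = \reg E^j$ with a clean dimension drop. For the canonical module $E^{c_0}$, where $c_0 = \operatorname{codim} A$, this is harmless, as it is maximal Cohen--Macaulay and $z$ is regular on it; the genuine difficulty is the higher deficiency modules, where one must bound $\reg$ and $\dim$ of both $0 :_{E^j} z$ and $E^j/zE^j$ directly. Here I expect to exploit the a priori bound $\dim E^j \le \dim \tilde R - j$ together with the square-free structure of the $E^j$ to keep $\reg \le \dim$ from degrading as the connecting maps mix adjacent cohomological degrees; making this bookkeeping exact is the crux of the argument.
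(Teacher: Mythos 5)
Your setup is fine as far as it goes --- polarization does give a square-free $\tilde I$ with a sequence $\theta$ of linear forms regular on both $\tilde R$ and $A=\tilde R/\tilde I$, the Rees change-of-rings isomorphism and the base-independence of $\reg$ and $\dim$ are correct, and your displayed short exact sequence relating $\Ext^j_{\tilde R}(A/zA,\omega_{\tilde R})$ to $E^{j-1}/zE^{j-1}$ and $0:_{E^j}z$ is the right one. The genuine gap is the step you yourself call ``the crux'': the descent. The inductive hypothesis $\reg E^j\le\dim E^j$ gives no control whatsoever over $\reg\bigl(0:_{E^j}z\bigr)$ or $\reg\bigl(E^{j-1}/zE^{j-1}\bigr)$ once $z$ fails to be a nonzerodivisor on the $E^j$; Castelnuovo--Mumford regularity does not pass to colon submodules or to quotients by non-regular linear forms, and since the regularity of the middle term of a short exact sequence is bounded only by the maximum of the regularities of the two outer terms, the induction has nothing to feed on. Even the case you call harmless is not: the canonical module of $A/zA$ sits in your sequence with quotient term $0:_{E^{c_0+1}}z$, i.e.\ torsion in the \emph{first deficiency module} of $A$, which is exactly the uncontrolled object (and, as a side point, $K_A$ need not be maximal Cohen--Macaulay when $A$ is not Cohen--Macaulay --- it is only $(S_2)$ --- although $z$ is indeed $K_A$-regular since $\mathrm{Ass}\,K_A\subseteq\mathrm{Ass}\,A$).

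The gap is essential rather than technical, for a structural reason: the square-free theory you invoke is available only at the very first step. After one de-polarization, $A/\theta_1A$ is a monomial quotient that is no longer square-free, so its Ext modules are not square-free modules in Yanagawa's sense, and the ``square-free structure of the $E^j$'' you hope to exploit has evaporated; the inductive hypothesis at later steps is precisely the statement being proved, for a class in which the only known replacement for square-freeness is that the Ext modules of an arbitrary monomial quotient are $(1,\dots,1)$-determined. That replacement is exactly Lemma~\ref{thm:multbyxjonExt}, which is the heart of the paper's proof of Theorem~\ref{thm:regIsLessThanDim}: it yields a Stanley decomposition satisfying Proposition~\ref{thm:stanleyDecompAndFiltr}, hence $\reg\le\dim$, and then Corollary~\ref{thm:regIsLessThanNminusOne} is a one-line consequence via $\dim\Ext^i_R(R/I,\omega_R)\le n-i$ \cite{BrHe:CM}*{Corollary~3.5.11}. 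So your proposal defers the entire content of the theorem to the unproven descent lemma; filling that hole would in effect mean redoing the paper's multigraded analysis, at which point the polarization detour is superfluous.
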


The above conclusion need not hold, in general, without the assumption that
$I$ is a monomial ideal;
see~\cite{ChardDCruzRegCurvesSurf03}*{Example 3.5}.

Our approach to bounding the regularity of canonical and deficiency modules
differs from that of Hoa and Hyry. We show that if $I$ is a monomial
ideal, then $\Ext^i_R(R/I, \omega_R)$ has a multigraded filtration,
called \textit{Stanley filtration}, introduced by D.~Maclagan and
G.~G.~Smith \cite{MacSmiUnifBdsMultgrReg05}; the bound on regularity
follows from this filtration.

In the next section, we discuss some preliminaries on Stanley filtrations
and local cohomology. In Section~\ref{sec:regAndDim} we prove our
main result.

\section{Preliminaries}
\label{sec:stanleyFiltr}

Hereafter we take $R$-modules to be graded by $\ints^n$, giving $\deg x_i =
\mathbf e_i$,
the $i$th unit vector of $\ints^n$.
We call this the
\emph{multigrading} of $R$ and $R$-modules.

\begin{notation}
Let $\mathbf a = (a_1, \ldots, a_n) \in \ints^n$. Write $\mathbf x^{\mathbf
a} = \prod_{i=1}^n x_i^{a_i} \in \Bbbk[x_1^{\pm 1}, \ldots, x_n^{ \pm 1}]$.
We say that $\mathbf a$ is the \emph{degree} of $\mathbf x^{\mathbf a}$,
and write $\deg \mathbf x^{\mathbf a} = \mathbf a$.
Define $\supp(\mathbf a) = \{ i : a_i \neq 0\}$. Define $\mathbf a^+,
\mathbf a^- \in \naturals^n$ by the conditions $\mathbf a = \mathbf a^+ -
\mathbf a^-$ and $\supp(\mathbf a^+) \cap \supp(\mathbf a^-) =
\varnothing$. We write $\totdeg{\mathbf a}$ for $\sum_{i=1}^n a_i$, the
\emph{total degree} of $\mathbf a$ (and of the monomial $\mathbf x^{\mathbf
a}$).  We will say that $\mathbf a$ (equivalently $\mathbf x^{\mathbf a}$)
is \emph{square-free} if $a_i \in \{0, 1\}$ for all $i$.  Let $[n] = \{1,
\ldots, n\}$.  For $\Lambda \subseteq [n]$, we set $\mathbf e_\Lambda =
\sum_{i \in \Lambda} \mathbf e_i$ and abbreviate the (square-free) monomial
$\mathbf x^{\mathbf e_\Lambda}$ as $x_\Lambda$. The canonical module of $R$
is $\omega_R = R(-\mathbf e_{[n]})$.
\end{notation}

Let $M$ be a finitely generated multigraded $R$-module.  Let $m \in M$ be a
homogeneous element and let $G \subset \{x_1,\dots,x_n\}$ be a subset such
that $um \neq 0$ for all monomials $u \in \Bbbk[G]$. The $\Bbbk$-subspace
$\Bbbk[G]m$ of $M$ generated by all the $um$, where $u$ is a monomial in
$\Bbbk[G]$, is called a \emph{Stanley space}. A \emph{Stanley
decomposition} of $M$ is a finite set $\mathcal S$ of pairs $(m, G)$ of
homogeneous elements $m \in M$ and $G \subseteq \{x_1, \ldots, x_n\}$ such
that $\Bbbk[G]m$ is a Stanley space for all $(m,G) \in \mathcal S$
and \begin{equation}
\label{eqn:stanleyDecompDefn}
M =_\Bbbk \bigoplus_{(m,G) \in \mathcal S} \Bbbk[G]m
\end{equation}
(We use $=_\Bbbk$ to emphasize that the decomposition is only as vector
spaces.) Properties of such decompositions
have been widely studied; we follow the approach
of~\cite{MacSmiUnifBdsMultgrReg05}*{Section~3} where Stanley decompositions
were used to get bounds for multigraded regularity.
Following~\cite{MacSmiUnifBdsMultgrReg05}*{Definition~3.7},
we define a \emph{Stanley filtration} to be a Stanley decomposition with an
ordering of
pairs $\{(m_i, G_i): 1 \leq i \leq p\}$ such that,  for $j =1,2,\dots,p$,
\[
\left(\sum_{i=1}^j Rm_i\right) \Big / \left(\sum_{i=1}^{j-1} Rm_i\right) =
\Bbbk[G_j](-\deg m_j).
\]
as $R$-modules. Note, in this case, that
\begin{equation*}
\label{eqn:stanleyFiltrDefn}
0 \subseteq Rm_1 \subseteq \cdots \subseteq \sum_{i=1}^j Rm_i \subseteq
\cdots  \subseteq \sum_{i=1}^p Rm_i = M
\end{equation*}
is a prime filtration of $M$, as in \cite{eiscommalg}*{p.~93, Proposition~3.7}.

\begin{propn}
\label{thm:stanleyDecompAndFiltr}
Let $M$ be a multigraded $R$-module with a Stanley decomposition $\mathcal
S$ such that for all $(m, G) \in \mathcal S$, $(\deg m)^+$ is square-free
and $G = \supp((\deg m)^+)$. Then $\mathcal S$ gives a Stanley filtration.
Moreover $\reg M \leq \max \{\totdeg{\deg m} : (m,G) \in \mathcal S\}$.
\end{propn}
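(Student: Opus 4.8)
The plan is to order $\mathcal S$ by \emph{decreasing} total degree of its generators; that is, to index the pairs as $(m_1, G_1), \ldots, (m_p, G_p)$ so that $\totdeg{\deg m_1} \geq \cdots \geq \totdeg{\deg m_p}$. Writing $M_j = \sum_{i=1}^j R m_i$, the crux is to show that for each $j$ and each $k$ with $k \notin G_j$ one has $x_k m_j \in M_{j-1}$. Granting this, any monomial $u$ factors as $u = u'u''$ with $u' \in \Bbbk[G_j]$ and $u''$ in the remaining variables, so $u'' m_j \in M_{j-1}$ when $u'' \neq 1$ and hence $u m_j \in \Bbbk[G_j]m_j + M_{j-1}$; thus $M_j = M_{j-1} + \Bbbk[G_j]m_j$. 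An induction (using the claim at every index) then gives $M_{j-1} = \bigoplus_{i < j} \Bbbk[G_i]m_i$ as $\Bbbk$-vector spaces, so $\Bbbk[G_j]m_j \cap M_{j-1} = 0$ by the directness of the Stanley decomposition. Consequently $M_j/M_{j-1} \cong \Bbbk[G_j]m_j \cong \Bbbk[G_j](-\deg m_j)$, which is exactly the Stanley filtration condition.

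The main work, and the principal obstacle, is the claim $x_k m_j \in M_{j-1}$, which I would establish by a degree computation. By hypothesis $(\deg m_j)^+ = \mathbf e_{G_j}$, so $\deg m_j = \mathbf e_{G_j} - (\deg m_j)^-$ with $\supp((\deg m_j)^-) \cap G_j = \varnothing$. Set $\mathbf c = \deg m_j + \mathbf e_k$; a direct inspection of coordinates shows $\mathbf c^+$ is again square-free, with $H := \supp(\mathbf c^+)$ equal to $G_j$ or $G_j \cup \{k\}$. Now $x_k m_j$ lies in the degree-$\mathbf c$ component $M_{\mathbf c} = \bigoplus_{(m,G) \in \mathcal S} (\Bbbk[G]m)_{\mathbf c}$. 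For a pair $(m, G)$ to contribute, there must be a monomial $u \in \Bbbk[G]$ with $\deg(um) = \mathbf c$; comparing coordinates inside and outside $G$ forces $\mathbf c_l \geq 1$ for $l \in G$ and $\mathbf c_l \leq 0$ for $l \notin G$, that is $G = \supp(\mathbf c^+) = H$, and then $\deg u = \mathbf c - \deg m$ vanishes on $H = G$ while being supported on $G$, so $u = 1$ and $\deg m = \mathbf c$. Hence $M_{\mathbf c}$ is spanned by those generators $m_i$ with $G_i = H$ and $\deg m_i = \mathbf c$, and $x_k m_j$ is a $\Bbbk$-linear combination of them. Each such $m_i$ satisfies $\totdeg{\deg m_i} = \totdeg{\mathbf c} = \totdeg{\deg m_j} + 1$, so it precedes $m_j$ in the decreasing-total-degree order; therefore $x_k m_j \in M_{j-1}$, as needed.

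For the regularity bound I would pass to the underlying $\ints$-grading by total degree and run the filtration through the standard estimate $\reg B \leq \max\{\reg A, \reg C\}$ applied to $0 \to M_{j-1} \to M_j \to M_j/M_{j-1} \to 0$. By induction on $j$ this yields $\reg M \leq \max_j \reg(M_j/M_{j-1}) = \max_j \reg(\Bbbk[G_j](-\deg m_j))$. Finally, $\Bbbk[G_j] = R/(x_l : l \notin G_j)$ is resolved over $R$ by the Koszul complex on the linear forms $x_l$, $l \notin G_j$, so $\reg \Bbbk[G_j] = 0$; since the twist $(-\deg m_j)$ becomes a shift by $\totdeg{\deg m_j}$ in total degree, $\reg(\Bbbk[G_j](-\deg m_j)) = \totdeg{\deg m_j}$. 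Combining the two displays gives $\reg M \leq \max\{\totdeg{\deg m} : (m, G) \in \mathcal S\}$, as claimed.
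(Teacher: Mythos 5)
Your proof is correct and follows essentially the same route as the paper's: order the pairs by weakly decreasing total degree, prove the key claim that $x_k m_j \in M_{j-1}$ for $k \notin G_j$ via a multidegree analysis (your per-degree coordinate computation is exactly the paper's observation that any graded component whose degree has square-free positive part is spanned by the generators $m_i$ themselves), and then bound regularity inductively using the short exact sequences $0 \to M_{j-1} \to M_j \to \Bbbk[G_j](-\deg m_j) \to 0$. There are no gaps; the only difference is presentational, in that the paper packages your coordinate computation as a global span identity and runs the induction through an explicit colon-ideal/basis statement.
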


\begin{proof}
We order $\mathcal S  = \{(m_1,G_1),\dots,(m_p,G_p)\}$ so that
$\totdeg{\deg m_1} \geq \cdots \geq \totdeg{\deg m_p}$.
% for all $1 \leq i < j\leq p$, $|G_i| > |G_j|$, or, $|G_i| = |G_j|$ and $\deg m_i \geq \deg m_j$.
It follows from our hypothesis that 
\begin{equation}
\label{add1}
\mathrm{span}_\Bbbk \{ m_1,\dots,m_p\}=
\mathrm{span}_\Bbbk \{ m \in M: \supp((\deg m)^+) \mbox{ is square-free}\},
\end{equation}
where $\mathrm{span}_\Bbbk(V)$ denotes the $\Bbbk$-vector space spanned by
elements in $V$.  Write $M^{(j)}$ for
 $\sum_{i=1}^j Rm_i$.  We will now show, inductively on $j$, that  
{\def\theenumi{\Alph{enumi}}\begin{enumerate}
\item \label{enum:primaryfiltration} $M^{(j-1)} :_R m_j = (x_k ; x_k \not
\in G_j)$.
\item \label{enum:BasisAndprimaryfiltration} The set $\cup_{i=1}^j \{ u m_i
: u\, \text{is a monomial in}\, \Bbbk[G_i]\}$ is a $\Bbbk$-basis for
$M^{(j)}$.
\end{enumerate}
They imply that $\mathcal S$ is a Stanley filtration of $M$.}

Let $j=1$. We will show $(0:_Rm_1) = (x_k ; x_k \not \in G_1)$. For all
monomials $u \in \Bbbk[G_1]$, $um_1 \neq 0$, from the definition of the
decomposition. Therefore we must show that $x_l m_1 =0$ for any $x_l \not
\in G_1$.  Let $x_l \not \in G_1$.  Then $(\deg x_l m_1)^+$ is
square-free, and by \eqref{add1}, $x_lm_1 \in \mathrm{span}_\Bbbk
\{m_1,\dots,m_p\}$. However, from the choice of $m_1$, we see that $x_lm_1
= 0$. Therefore $(0:_Rm_1) = (x_k ; k \not \in G_1)$
proving~\eqref{enum:primaryfiltration}.  Note
that~\eqref{enum:BasisAndprimaryfiltration} follows immediately.

Now assume that $j>1$ and that the assertion is known for all $i < j$.
We first show \eqref{enum:primaryfiltration}.
Let
$u$ be a monomial in $\Bbbk[G_j]$. By the
statement~\eqref{enum:BasisAndprimaryfiltration} for $j-1$, the set
$\cup_{i=1}^{j-1} \{
vm_i : v\, \text{is a monomial in}\, \Bbbk[G_i]\}$ is a $\Bbbk$-basis for
$M^{(j-1)}$. Since $um_j$ is an element of the basis of $M$
coming from the Stanley decomposition, $um_j$ is not in the $\Bbbk$-linear
span of $\cup_{i=1}^{j-1} \{ vm_i : v\, \text{is a monomial in}\,
\Bbbk[G_i]\}$, \textit{i.e.}, $um_j \not \in M^{(j-1)}$.
It remains to prove that $x_l m_j \in M^{(j-1)}$ for any $x_l \not \in G_j$.
Let $x_l \not \in G_j$.
Since $(\deg x_lm_j)^+$ is square-free, it follows, from~\eqref{add1} and
the ordering of the $(m_i, G_i)$, that
\[
x_l m_j \in \mathrm{span}_\Bbbk\{m_i : 1 \leq i \leq p,\ \deg m_i > \deg
m_j\} \subseteq \mathrm{span}_\Bbbk\{m_1,\dots,m_{j-1}\}.
\]
Therefore $x_l m_j \in M^{(j-1)}$, proving the
statement~\eqref{enum:primaryfiltration} for $j$.

From~\eqref{enum:primaryfiltration}, we see that the following sequence is
exact:
\begin{equation}
\label{add2}
0  \longrightarrow M^{(j-1)} \longrightarrow M^{(j)} \longrightarrow
\Bbbk[G_j]m_j \longrightarrow 0.
\end{equation}
Now statement~\eqref{enum:BasisAndprimaryfiltration} for $j$ follows from
the induction hypothesis.

The assertion about regularity is
essentially~\cite{MacSmiUnifBdsMultgrReg05}*{Theorem~4.1}, but we give a
quick proof here. We will show that $\reg M^{(j)} \leq \max\{\totdeg{\deg
m_i} : 1 \leq i \leq j\}$ for all $1 \leq j \leq p$. It holds for $j=1$.
For $j > 1$, it follows from~\cite{eiscommalg}*{Corollary~20.19} and
the exact sequence~\eqref{add2} that $\reg M^{(j)} \leq \max \{\reg
M^{(j-1)}, \totdeg{\deg m_j}\}$; induction completes the proof.
\end{proof}

%Finally, we recall the basics of local cohomology,
Finally, we recall some basics of local cohomology,
following~\cite{BrHe:CM}*{Sections~3.5--3.6}.
Let  $\check C^\bullet$ be the \v Cech complex on
$x_1, \ldots, x_n$; the term at the $i$th cohomological degree is
\[
\check C^i = \bigoplus_{\Lambda \subseteq [n], |\Lambda| = i}
R_{x_\Lambda} 
\]
where $R_{x_\Lambda}$ denotes inverting the monomial $x_\Lambda$. Note that
$\check C^\bullet$ is a complex of $\ints^n$-graded $R$-modules, with
differentials of degree $0$.  For a
finitely generated $R$-module $M$, we set $\check C^\bullet(M) = 
\check C^\bullet\otimes_R (M)$. Then $\loco^i_{\mathfrak m}(M) = 
\homology^i(\check C^\bullet(M))$.

\begin{defn}
\label{thm:restrCechComplex}
Let $F \subseteq [n]$. We define $\check C_F^\bullet$ to be the subcomplex
of $\check C^\bullet$ obtained by setting
\[
\check C_F^i = 
\begin{cases}
0, & \text{if}\, i < |F|, \\
\bigoplus\limits_{\substack{F \subseteq \Lambda \subseteq [n] \\ |\Lambda| = i}}
R_{x_\Lambda}, & \text{otherwise}.
\end{cases}
\]
\end{defn}

\begin{lemma}
\label{thm:restrCechCx}
Let $I$ be a monomial ideal. Let $F \subseteq [n]$ and $\mathbf a \in
\ints^n$ be such that $\supp(\mathbf a^-) = F$. Then $\loco^i_{\mathfrak
m}(R/I)_{\mathbf a} = \homology^i(\check C_F^\bullet \otimes_R
(R/I))_{\mathbf a}$.
\end{lemma}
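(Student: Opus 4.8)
The plan is to work one multidegree at a time, exploiting the fact that extracting the degree-$\mathbf a$ strand of a complex of $\ints^n$-graded $R$-modules is an exact functor (it merely selects a direct summand of $\Bbbk$-vector spaces) and hence commutes with cohomology. Since the differentials of $\check C^\bullet$ have degree $0$, I get $\loco^i_{\mathfrak m}(R/I)_{\mathbf a} = \homology^i(\check C^\bullet \otimes_R (R/I))_{\mathbf a} = \homology^i\bigl((\check C^\bullet \otimes_R (R/I))_{\mathbf a}\bigr)$, and the same for $\check C_F^\bullet$. It therefore suffices to show that the inclusion $\check C_F^\bullet \hookrightarrow \check C^\bullet$ becomes an isomorphism of complexes after tensoring with $R/I$ and restricting to degree $\mathbf a$.

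The heart of the argument is the vanishing
\[
\bigl((R/I)_{x_\Lambda}\bigr)_{\mathbf a} = 0 \quad \text{whenever}\ F \not\subseteq \Lambda.
\]
First I would check this for $R_{x_\Lambda}$ itself: a monomial $\mathbf x^{\mathbf b}$ of degree $\mathbf a$ lies in $R_{x_\Lambda}$ only if $b_i \geq 0$ for every $i \notin \Lambda$, so $(R_{x_\Lambda})_{\mathbf a} \neq 0$ forces $a_i \geq 0$ for all $i \notin \Lambda$, i.e.\ $\supp(\mathbf a^-) = F \subseteq \Lambda$. Because the degree-$\mathbf a$ functor is exact and $(R/I)_{x_\Lambda} = R_{x_\Lambda}/I R_{x_\Lambda}$ is a quotient of $R_{x_\Lambda}$, the vanishing of the $\mathbf a$-strand of $R_{x_\Lambda}$ immediately passes to $(R/I)_{x_\Lambda}$.

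With this in hand the identification is formal. In cohomological degree $i$ we have $\check C^i \otimes_R (R/I) = \bigoplus_{|\Lambda| = i} (R/I)_{x_\Lambda}$; taking the degree-$\mathbf a$ part kills every summand with $F \not\subseteq \Lambda$, leaving exactly the summands with $F \subseteq \Lambda$, which are precisely the summands of $\check C_F^i \otimes_R (R/I)$. When $i < |F|$ no $\Lambda$ of size $i$ can contain $F$, so both complexes vanish there, matching the definition of $\check C_F^\bullet$. Moreover $\check C_F^\bullet$ is genuinely a subcomplex of $\check C^\bullet$, since the \v Cech differential carries $R_{x_\Lambda}$ into the $R_{x_{\Lambda'}}$ with $\Lambda \subset \Lambda'$, and $\Lambda \supseteq F$ forces $\Lambda' \supseteq F$. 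Hence in degree $\mathbf a$ the inclusion restricts to the identity on the common surviving summands, giving $(\check C_F^\bullet \otimes_R (R/I))_{\mathbf a} = (\check C^\bullet \otimes_R (R/I))_{\mathbf a}$ as complexes; passing to cohomology yields the claim.

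The one step that deserves care — and the main place an error could creep in — is the monomial bookkeeping showing $(R_{x_\Lambda})_{\mathbf a} = 0$ precisely when $F \not\subseteq \Lambda$; in particular one must get the direction of the containment right, noting that negative exponents in a degree-$\mathbf a$ element can only be absorbed at the inverted variables indexed by $\Lambda$. Once this is correct, the passage to the quotient $R/I$ is purely a consequence of exactness, so no further subtlety arises.
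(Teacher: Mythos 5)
Your proof is correct and follows essentially the same route as the paper: both reduce to the observation that taking the degree-$\mathbf a$ strand commutes with cohomology and that the summands $(R/I)_{x_\Lambda}$ with $F \not\subseteq \Lambda$ vanish in degree $\mathbf a$, so the two complexes agree in that degree. You merely spell out the monomial bookkeeping (and the passage from $R_{x_\Lambda}$ to its quotient $(R/I)_{x_\Lambda}$) that the paper leaves implicit.
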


\begin{proof}
This argument is used implicitly in the proof of~\cite{TakaGCM05}*{Theorem~1}.
Since $\loco^i_{\mathfrak m}(R/I)_{\mathbf a} = \homology^i\left(( \check
C^\bullet(R/I))_{\mathbf a}\right)$, it suffices to show that $(\check
C^\bullet(R/I))_{\mathbf a} = (\check C_F^\bullet \otimes_R
(R/I))_{\mathbf a}$. This, in turn, stems from the fact that for all $1 \leq j \leq
n$, $\check C_F^j \otimes_R (R/I)$ consists precisely of the direct
summands of $\check C^j(R/I)$ that are non-zero in the multidegree $\mathbf
a$.
\end{proof}

\section{Proof of the main theorem}
\label{sec:regAndDim}

\begin{lemma}
\label{thm:multbyxjonExt}
Let $I \subset R$ be a monomial ideal.
Let $\mathbf a \in \ints^n$ and $j \in \supp(\mathbf
a^+)$. Then the multiplication map 
\[
x_j : \Ext^i_R(R/I, \omega_R)_{\mathbf a}
\longrightarrow \Ext^i_R(R/I, \omega_R)_{\mathbf a + \mathbf e_j}
\]
is bijective.
\end{lemma}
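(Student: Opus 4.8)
The plan is to pass from the Ext modules to local cohomology via graded local duality, and then to compute the relevant multiplication map on local cohomology using the restricted \v Cech complex of Definition~\ref{thm:restrCechComplex}. By graded local duality~\cite{BrHe:CM}*{Theorem~3.6.19}, there is a functorial isomorphism of multigraded $R$-modules
\[
\Ext^i_R(R/I, \omega_R) \cong \operatorname{Hom}_\Bbbk\left(\loco^{n-i}_{\mathfrak m}(R/I), \Bbbk\right),
\]
where the grading on the right is given by $(\operatorname{Hom}_\Bbbk(N,\Bbbk))_{\mathbf a} = \operatorname{Hom}_\Bbbk(N_{-\mathbf a}, \Bbbk)$. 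Since $R/I$ is finitely generated, both $\Ext^i_R(R/I,\omega_R)$ and $\loco^{n-i}_{\mathfrak m}(R/I)$ have finite-dimensional graded pieces, and Matlis duality over $\Bbbk$ is exact and $R$-linear. Hence the map $x_j : \Ext^i_R(R/I,\omega_R)_{\mathbf a} \to \Ext^i_R(R/I, \omega_R)_{\mathbf a+\mathbf e_j}$ is bijective if and only if its $\Bbbk$-dual, the multiplication map
\[
x_j : \loco^{n-i}_{\mathfrak m}(R/I)_{-\mathbf a - \mathbf e_j} \longrightarrow \loco^{n-i}_{\mathfrak m}(R/I)_{-\mathbf a},
\]
is bijective.

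I would therefore set $\mathbf c = -\mathbf a - \mathbf e_j$ and show that $x_j : \loco^{n-i}_{\mathfrak m}(R/I)_{\mathbf c} \to \loco^{n-i}_{\mathfrak m}(R/I)_{\mathbf c + \mathbf e_j}$ is bijective. The hypothesis $j \in \supp(\mathbf a^+)$ means $a_j > 0$, so $c_j = -a_j - 1 \leq -2$ while $(\mathbf c + \mathbf e_j)_j = -a_j \leq -1$; in particular both are strictly negative. Thus the $j$-th coordinate of $\mathbf c$ does not change sign upon adding $\mathbf e_j$, and the remaining coordinates are unchanged, so $\supp(\mathbf c^-) = \supp((\mathbf c + \mathbf e_j)^-) =: F$ with $j \in F$.

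Now I would apply Lemma~\ref{thm:restrCechCx} to both multidegrees at once: it identifies $\loco^{n-i}_{\mathfrak m}(R/I)_{\mathbf c}$ with $\homology^{n-i}(\check C_F^\bullet \otimes_R (R/I))_{\mathbf c}$, and likewise for $\mathbf c + \mathbf e_j$, using the \emph{same} subcomplex $\check C_F^\bullet$. Every summand of $\check C_F^\bullet$ has the form $R_{x_\Lambda}$ with $F \subseteq \Lambda$, so $j \in \Lambda$ and $x_j$ is a unit in $(R/I)_{x_\Lambda}$. Hence multiplication by $x_j$ is an isomorphism on each term of $\check C_F^\bullet \otimes_R (R/I)$, raising multidegree by $\mathbf e_j$, and it commutes with the \v Cech differentials because these are $R$-linear. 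Multiplication by $x_j$ thus restricts to an isomorphism of complexes $(\check C_F^\bullet \otimes_R (R/I))_{\mathbf c} \to (\check C_F^\bullet \otimes_R (R/I))_{\mathbf c + \mathbf e_j}$ and so induces an isomorphism on $\homology^{n-i}$; combined with Lemma~\ref{thm:restrCechCx} and local duality, this gives the claim.

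The part needing the most care is the bookkeeping of grading conventions in local duality, to be sure that multiplication by $x_j$ on the Ext module corresponds to multiplication by $x_j$ (rather than some other map) on $\loco^{n-i}_{\mathfrak m}(R/I)$, and that the positivity hypothesis on $a_j$ translates into both dual multidegrees having a strictly negative $j$-th coordinate. This last point is precisely what forces $j \in F$ and makes $x_j$ invertible on every summand of the restricted \v Cech complex, which is the geometric heart of the argument.
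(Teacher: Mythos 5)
Your proposal is correct and follows essentially the same route as the paper's own proof: reduce to the dual statement on $\loco^{n-i}_{\mathfrak m}(R/I)$ via graded local duality, observe that both relevant multidegrees have negative support equal to $F = \supp(\mathbf a^+)$ (which contains $j$), and then use Lemma~\ref{thm:restrCechCx} together with the fact that $x_j$ acts as a unit on every summand of $\check C_F^\bullet \otimes_R (R/I)$. Your extra care with the sign and support bookkeeping (checking $c_j \leq -2$ and $(\mathbf c+\mathbf e_j)_j \leq -1$) just makes explicit what the paper records as $\supp(\mathbf a^+ + \mathbf e_j) = F$.
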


\begin{proof}
We first claim that the multiplication map
\[
x_j: \loco^{n-i}_{\mathfrak m}(R/I)_{-\mathbf a -\mathbf e_j} \longrightarrow
\loco^{n-i}_{\mathfrak m}(R/I)_{-\mathbf a}
\]
is bijective. By local
duality~\cite{BrHe:CM}*{Theorem~3.6.19}, this map is the Matlis dual of the
multiplication by $x_j$ on
$\Ext^i_R(R/I, \omega_R)_{\mathbf a}$; hence, it suffices to prove the
claim.

Set $F = \supp(\mathbf a^+)$. Note that
$\supp(\mathbf a^+ + \mathbf e_j)=F$. For all $i$, $x_j$ acts as a unit on 
$\check C_F^i$. Therefore the homomorphism of complexes
$\check C_F^\bullet \otimes_R (R/I) \to \check C_F^\bullet \otimes_R (R/I)$
induced by the multiplication map $x_j : \check C_F^i \otimes_R (R/I) \to
\check C_F^i \otimes_R (R/I)$ is an isomorphism.
The claim now follows from Lemma~\ref{thm:restrCechCx}, which implies that
$\loco^i_{\mathfrak m}(R/I)_{-\mathbf a -\mathbf e_j} = \homology^i(\check
C_F^\bullet \otimes_R (R/I))_{-\mathbf a - \mathbf e_j}$ and 
$\loco^i_{\mathfrak m}(R/I)_{-\mathbf a} = \homology^i(\check C_F^\bullet
\otimes_R (R/I))_{-\mathbf a}$. 
\end{proof}

The above lemma says that if $I$ is a monomial ideal then $\Ext^i_R(R/I,
\omega_R)$ is a $(1,1,\dots,1)$-determined module, in the sense
of~\cite{MillAlexDuality00}*{Definition~2.1}.

\begin{proof}[Proof of Theorem~\ref{thm:regIsLessThanDim}]
For $F \subseteq [n]$, let $\mathcal M^i_F$ be a multigraded
$\Bbbk$-basis for 
\[
\bigoplus_{\substack{\mathbf a \in \naturals^n\\ \supp(\mathbf
a) \cap F = \varnothing}} \Ext^i_R(R/I, \omega_R)_{\mathbf e_F- \mathbf a}.
\]
Let $\mathcal S_i = \{(m, F) : F \subseteq [n]\; \text{and}\; m \in
\mathcal M^i_F\}$. Then it follows from Lemma~\ref{thm:multbyxjonExt}
that $\mathcal S_i$ is a Stanley decomposition of
$\Ext^i_R(R/I, \omega_R)$.
In particular, 
$$\dim \Ext^i(R/I, \omega_R) = \max \{ |F| : \mathcal M^i_F
\neq \varnothing\}.$$
By the construction of $\mathcal M_F^i$,
this Stanley decomposition satisfies the assumption of Proposition~\ref{thm:stanleyDecompAndFiltr}.
Therefore
\begin{align*}
\reg \left(\Ext^i_R(R/I, \omega_R) \right)
& \leq \max_{F \subseteq [n]} \{ \max \{ \totdeg{\deg m} : m \in \mathcal
M^i_F\}\}  \\
& \leq \max_{F \subseteq [n]} \{|F| : \mathcal M^i_F \neq \varnothing\} \\
& = \dim \Ext^i_R(R/I, \omega_R),
\end{align*}
as desired.
(The second inequality follows from the fact that,
for any $u \in \mathcal M_F^i$,
one has $\totdeg{\deg u}=|F|-\totdeg{(\deg u)^-}$.)
\end{proof}

We remark that, using~\cite{TakaGCM05}*{Theorem~1} and local duality, one can
determine whether $\mathcal M^i_F \neq \varnothing$ from certain
subcomplexes of the Stanley-Reisner complex of the radical $\sqrt{I}$ of
$I$.\medskip

\noindent
\textbf{Acknowledgments.}
The authors thank B.~Ulrich for helpful comments.  This paper was written
when the second author was visiting Purdue University in September 2009. He
would like to thank his host, G.~Caviglia, for his hospitality.

%\bibliography{kummini}
\def\cfudot#1{\ifmmode\setbox7\hbox{$\accent"5E#1$}\else
  \setbox7\hbox{\accent"5E#1}\penalty 10000\relax\fi\raise 1\ht7
  \hbox{\raise.1ex\hbox to 1\wd7{\hss.\hss}}\penalty 10000 \hskip-1\wd7\penalty
  10000\box7}
% \bib, bibdiv, biblist are defined by the amsrefs package.
\begin{bibdiv}
\begin{biblist}

\bib{BrHe:CM}{book}{
      author={Bruns, Winfried},
      author={Herzog, J{\"u}rgen},
       title={Cohen-{M}acaulay rings},
      series={Cambridge Studies in Advanced Mathematics},
   publisher={Cambridge University Press},
     address={Cambridge},
        date={1993},
      volume={39},
        ISBN={0-521-41068-1},
      review={\MR{95h:13020}},
}

\bib{ChardDCruzRegCurvesSurf03}{article}{
      author={Chardin, Marc},
      author={D'Cruz, Clare},
       title={Castelnuovo-{M}umford regularity: examples of curves and
  surfaces},
        date={2003},
        ISSN={0021-8693},
     journal={J. Algebra},
      volume={270},
      number={1},
       pages={347\ndash 360},
      review={\MR{MR2016666 (2004m:13036)}},
}

\bib{CHHregHdeg09}{misc}{
      author={Chardin, Marc},
      author={Ha, Dao~Thanh},
      author={Hoa, L{\^e}~Tu{\^a}n},
       title={{C}astelnuovo-{M}umford regularity of ext modules and homological
  degree},
        date={2009},
        note={arXiv:0903.4535v1 [math.AC]},
}

\bib{eiscommalg}{book}{
      author={Eisenbud, David},
       title={Commutative algebra},
      series={Graduate Texts in Mathematics},
   publisher={Springer-Verlag},
     address={New York},
        date={1995},
      volume={150},
        ISBN={0-387-94268-8; 0-387-94269-6},
        note={With a view toward algebraic geometry},
      review={\MR{97a:13001}},
}

\bib{HoaHyryCMreg06}{article}{
      author={Hoa, L{\^e}~Tu{\^a}n},
      author={Hyry, Eero},
       title={Castelnuovo-{M}umford regularity of canonical and deficiency
  modules},
        date={2006},
        ISSN={0021-8693},
     journal={J. Algebra},
      volume={305},
      number={2},
       pages={877\ndash 900},
      review={\MR{MR2266858 (2007g:13023)}},
}

\bib{MillAlexDuality00}{article}{
      author={Miller, Ezra},
       title={The {A}lexander duality functors and local duality with monomial
  support},
        date={2000},
        ISSN={0021-8693},
     journal={J. Algebra},
      volume={231},
      number={1},
       pages={180\ndash 234},
      review={\MR{MR1779598 (2001k:13028)}},
}

\bib{MacSmiUnifBdsMultgrReg05}{article}{
      author={Maclagan, Diane},
      author={Smith, Gregory~G.},
       title={Uniform bounds on multigraded regularity},
        date={2005},
        ISSN={1056-3911},
     journal={J. Algebraic Geom.},
      volume={14},
      number={1},
       pages={137\ndash 164},
      review={\MR{MR2092129 (2005g:14098)}},
}

\bib{TakaGCM05}{article}{
      author={Takayama, Yukihide},
       title={Combinatorial characterizations of generalized {C}ohen-{M}acaulay
  monomial ideals},
        date={2005},
        ISSN={1220-3874},
     journal={Bull. Math. Soc. Sci. Math. Roumanie (N.S.)},
      volume={48(96)},
      number={3},
       pages={327\ndash 344},
      review={\MR{MR2165349 (2006e:13017)}},
}

\bib{YanaSRringsDuality00}{article}{
      author={Yanagawa, Kohji},
       title={Alexander duality for {S}tanley-{R}eisner rings and squarefree
  {$\mathbb N\sp n$}-graded modules},
        date={2000},
        ISSN={0021-8693},
     journal={J. Algebra},
      volume={225},
      number={2},
       pages={630\ndash 645},
      review={\MR{MR1741555 (2000m:13036)}},
}

\end{biblist}
\end{bibdiv}

\end{document}